\documentclass[letterpaper,12pt]{amsart}

\textwidth=16.00cm 
\textheight=22.00cm 
\topmargin=0.00cm
\oddsidemargin=0.00cm 
\evensidemargin=0.00cm 
\headheight=0cm 
\headsep=0.5cm

\title{Normal Binary Graph Models}

\author{ Seth Sullivant}
\address{Department of Mathematics \\ North Carolina State University, Raleigh, NC 27695}
\email{smsulli2@ncsu.edu}

\date{}

\usepackage{latexsym,array,delarray,amsthm,amssymb,epsfig}


\theoremstyle{plain}
\newtheorem{thm}{Theorem}[section]
\newtheorem{lemma}[thm]{Lemma}
\newtheorem{prop}[thm]{Proposition}

\theoremstyle{definition}

\newtheorem{ex}[thm]{Example}
\newtheorem{pr}[thm]{Problem}

\newtheorem{ques}[thm]{Question}

\theoremstyle{remark}



\newcommand{\bfi}{\mathbf{i}}



\newcommand{\ind}{\mbox{$\perp \kern-5.5pt \perp$}}

\begin{document}
\maketitle

\begin{abstract}
We show that the marginal semigroup of a binary graph model is normal if and only if the graph is free of $K_4$ minors.  The technique, based on the interplay of normality and the geometry of the marginal cone, has potential applications to other normality questions in algebraic statistics.
\end{abstract}


\section{Introduction}

The summary of high-dimensional data in a  multiway table by lower order marginals is a staple of the statistical sciences.  In the case where the table is a contingency table, that is, a table of counts, the table is a nonnegative integral multiway array, and the marginal summaries are a list of  lower order nonnegative integral multiway arrays.  For many applications, e.g. data confidentiality and hypothesis testing \cite{Diaconis1998}, we would like to determine whether a given list of lower order nonnegative integral arrays could actually be the list of margins of a high dimensional contingency table.

More formally, we have a linear map 
$$\pi_{\Delta}: \mathbb{R}^r  \longrightarrow  \mathbb{R}^d$$
which is the linear map that computes (some collection $\Delta$ of) marginals of a $r_1 \times r_2 \times \ldots \times r_n$ $n$-way array.  The fundamental problem is to characterize the image semigroup
$$ \mathcal{S}_\Delta := \pi_\Delta( \mathbb{N}^r).$$
Characterizing the semigroup is a problem that typically falls into two pieces.  One piece is to characterize the image cone
$$ \mathcal{C}_\Delta := \pi_\Delta( \mathbb{R}_{\geq 0}^r),$$
by giving its facet defining inequalities.  The second piece concerns understanding the discrepancy between the semigroup $\mathcal{S}_\Delta$ and its normalization; that is, describing the set of holes:
$$
\mathcal{H}_\Delta := (\mathcal{C}_\Delta \cap \mathbb{Z}^d) \setminus \mathcal{S}_\Delta.
$$
As a very special case of this second problem one is lead to the question:

\begin{ques}\label{ques:normal}
For which $\Delta$ and $r$ is the set of holes $\mathcal{H}_\Delta$ empty?  In other words, for which $\Delta$ and $r$ is the semigroup $\mathcal{S}_\Delta$ \emph{normal}?
\end{ques}

A general answer to question \ref{ques:normal} seems out of reach with present techniques.  Even in the case of a three cycle $\Delta = [12][13][23]$ it is still open to classify the $r$ which produce a normal semigroup   (though there remain only a finite number of cases left to check at present).

In this paper, we focus on the special case where $G$ is a graph, and $r_1 = \ldots = r_n =2$, the so-called \emph{binary graph models} \cite{Develin2003}.  In particular, we will show the following:

\begin{thm}\label{thm:main}
Let $G$ be a graph with vertex set $[n]$ and let $r_1 = \ldots = r_n = 2$.  Then the marginal semigroup $\mathcal{S}_G$ is normal if and only if $G$ is free of $K_4$ minors.
\end{thm}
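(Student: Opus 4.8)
The first move is to pass to a polytope. Every generator $\pi_\Delta(\mathbf{e}_x)$ of $\mathcal{S}_G$, $x\in\{0,1\}^n$, is a $0/1$ vector whose coordinates sum to $1$, so $\mathcal{S}_G$ is the affine semigroup of the lattice polytope $P_G:=\conv\{\pi_\Delta(\mathbf{e}_x):x\in\{0,1\}^n\}$, and normality of $\mathcal{S}_G$ is equivalent to $P_G$ having the integer decomposition property: every lattice point of the marginal cone $\mathcal{C}_G$ lying in the group generated by $\mathcal{S}_G$ is a nonnegative integer combination of generators. A second ingredient I would keep at hand is the facet description of $\mathcal{C}_G$ for $K_4$-minor-free $G$: the cone is cut out by ``triangle'' inequalities (the binary edge-marginal analogue of Barahona's triangle description of cut polytopes of $K_5$-minor-free graphs, via the universal-vertex correspondence that sends $K_4$-minor-free to $K_5$-minor-free), which gives tight control over which integral arrays are even consistent.

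For the forward implication — a $K_4$ minor forces non-normality — I would first exhibit an explicit hole for $\mathcal{S}_{K_4}$: a consistent integral marginal array at a small total (I expect total $2$) that certifiably is not the margin of any integral $4$-way binary array, certified by a short Graver-basis computation or by an explicit supporting functional. Then I would show non-normality propagates upward along minor operations. Vertex deletion is clean: ``freezing'' a vertex $v$ in state $0$ is a monomial (pure) section of the coordinate projection $\mathcal{S}_G\to\mathcal{S}_{G\setminus v}$, so a hole $h$ of $\mathcal{S}_{G\setminus v}$ lifts to the array of $\mathcal{S}_G$ obtained by placing all mass in the slab $x_v=0$; any $\mathbb{N}$-realization of the lift restricts to one of $h$, so the lift is a hole. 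Edge contraction is also clean: lifting by duplicating the contracted vertex onto the diagonal makes the margin on the contracted edge a diagonal matrix, and nonnegativity then forces any realization to live on that diagonal, hence to descend. Edge deletion is the one fiddly case: a real realization of a hole of $\mathcal{S}_{G\setminus e}$ must first be perturbed within its fiber until the margin on $e$ is integral, which is possible because that margin ranges over a $2\times 2$ transportation polytope and such polytopes are integral. Iterating, $\mathcal{S}_G$ is not normal whenever $G$ has a $K_4$ minor.

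For the converse I would induct on a series--parallel decomposition. A graph is $K_4$-minor-free exactly when it is a partial $2$-tree, and every such graph is assembled from copies of $K_2$ and $K_3$ by $0$-sums (disjoint union), $1$-sums (gluing along a vertex), and $2$-sums (gluing along an edge, together with the ``series'' move of subdividing an edge). For each assembly step I would identify $\mathcal{S}_G$ with a fiber product $\mathcal{S}_{G_1}\times_{\mathcal{S}_\sigma}\mathcal{S}_{G_2}$ over the semigroup $\mathcal{S}_\sigma$ of margins on the shared separator together with the grand total: $\mathcal{S}_\sigma=\mathbb{N}$ for a $0$-sum, $\mathbb{N}^2$ for a $1$-sum, and $\mathbb{N}^4$ for a $2$-sum (or subdivision, gluing over the full $2\times 2$ joint margin of the separating pair). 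The non-formal content of each identification is realizability — two marginal arrays on $G_1$ and $G_2$ that agree on the separator data can always be glued to a global array — which I would prove by coupling the two arrays slab-by-slab along the separator, again a transportation-polytope argument using that arrays of equal total can always be coupled integrally. Granting the identifications, normality of $\mathcal{S}_G$ reduces to normality of $\mathcal{S}_{G_1}$ and $\mathcal{S}_{G_2}$ together with normality of the relevant fiber product, and the base cases $\mathcal{S}_{K_2}=\mathbb{N}^4$ and $\mathcal{S}_{K_3}$ are checked by hand.

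The crux is the normality of the $2$-sum fiber product $\mathcal{S}_{G_1}\times_{\mathbb{N}^4}\mathcal{S}_{G_2}$. This cannot be quoted as a formal fact — a fiber product of normal affine semigroups over $\mathbb{N}^k$ need not be normal — so it must be proved from the specific geometry of $\mathcal{C}_G$: given a lattice point of $\mathcal{C}_G$ in the group generated by $\mathcal{S}_G$, I would use the triangle-inequality description of $\mathcal{C}_G$ to split it along the $2$-separator into pieces lying in $\mathcal{C}_{G_1}$ and $\mathcal{C}_{G_2}$ with a common integral separator margin, apply normality of $\mathcal{S}_{G_1}$ and $\mathcal{S}_{G_2}$ to each piece, and recombine the two integral realizations via the gluing construction above. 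Engineering the ``split along the separator'' so that the two pieces land simultaneously in the correct cones and in the correct lattices while keeping the shared margin integral is the technical heart of the proof, and is precisely the point where normality and the facet geometry of the marginal cone interact, as advertised in the abstract.
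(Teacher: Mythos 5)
Your architecture tracks the paper's own proof quite closely---minor-monotonicity for the forward direction, a treewidth-$2$ assembly from triangles and edges plus the Barahona--Mahjoub facet description for the converse---but the one step you yourself flag as ``the technical heart'' is exactly the step that carries all the content, and you have not supplied it. Concretely: when a $2$-sum deletes the shared edge $e=uv$ (equivalently, when you pass from a chordal completion of $G$ back to $G$ by deleting an edge), an integral point $x$ of $\mathcal{C}_{G\setminus e}$ must be lifted to an integral point $(x,y)$ of $\mathcal{C}_G$, where $y$ is the single extra coordinate $p^{uv}$. The set of feasible $y$ over a fixed $x$ is an interval, and nothing formal guarantees that a nonempty interval contains an integer; your appeal to the integrality of $2\times 2$ transportation polytopes does not suffice, because the feasible interval for $p^{uv}$ is cut down not only by the transportation constraints on the pair $\{u,v\}$ but also by the odd-cycle inequalities through $e$ coming from both sides of the separator. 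The missing ingredient is the observation that in the facet description of $\mathcal{C}_G$ for $K_4$-minor-free $G$ (Theorem \ref{thm:facetsbinary}) every inequality has coefficient $0$ or $\pm 1$ on the coordinate $p^{uv}$, so the feasible interval is $[c_1,c_2]$ with $c_1,c_2\in\mathbb{Z}$ and real feasibility implies integral feasibility; this is Lemma \ref{lem:facepopper} together with that coefficient check, and it is precisely the advertised interaction between normality and facet geometry. Your proposal points at the place where this argument must happen but does not carry it out.

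Two smaller remarks. In the forward direction you do not need the ``fiddly'' edge-deletion lifting at all: a $K_4$ minor of a simple graph is realized by vertex deletions and edge contractions alone, and for those two operations $\mathcal{S}_{K_4}$ sits inside $\mathcal{S}_G$ as a facial subsemigroup (intersect with a supporting hyperplane such as $p^{\{v\}}_{2}=0$, or the off-diagonal sum for a contracted edge), so non-normality propagates cleanly; the edge-deletion lifting you sketch suffers from the same integrality-of-the-fiber issue as above and is best dropped. Also, the facet description you will need is the full odd-cycle system of Theorem \ref{thm:facetsbinary}, not just triangle inequalities; your gluing step along retained cliques is otherwise sound and matches the paper's reducible-decomposition lemma.
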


Note, in particular, that Theorem \ref{thm:main} implies that marginal semigroups are rarely normal.
Section 2 contains the proof of Theorem \ref{thm:main}, whose main idea is to relate normality properties of graphs to normality properties of subgraphs.  One of the key take-home messages of this proof is that these normality problems can often be addressed by taking the geometry of the cone $\mathcal{C}_\Delta$ into account, instead of working only with the semigroup.  This is the content of Lemma \ref{lem:facepopper}.  Section 3 is devoted to a description of some further possible directions of exploration.

  
\section{The Proof}

In this section we formally set up the notion of the marginal cone and the marginal semigroup.  Then we prove some key lemmas for the general normality question for marginal semigroups, and remind the reader of some useful structural results in graph theory on $K_4$ minor-free graphs.  These ideas come together to provide the proof of Theorem \ref{thm:main}.

Fix a positive integer $n$.  Let $[n] = \{1,2,\ldots, n\}$.  For each $i \in [n]$ let $r_i$ be a positive integer.   For any set $F \subseteq [n]$ let $\mathcal{R}_F = \prod_{i \in F} [r_i]$ be a set of indices.  In the special case $F = [n]$, let $\mathcal{R} := \mathcal{R}_F$.  For any set $A$, let $\mathbb{R}^A$ be the real vector space of dimension $\#$ with basis $\{e_a : a \in A\}$.  In the special case where $A = \mathcal{R}$ we say that
$\mathbb{R}^\mathcal{R}$ is the space of $r_1 \times r_2 \times \cdots \times r_n$ tables.  Elements of $\mathcal{R}$ are $\bfi = (i_1, \ldots, i_n)$.  If $F \subseteq [n]$, we denote $\bfi_F = (i_f)_{f \in F}$.

Let $\Delta \subseteq 2^{[n]}$ be a collection of subsets of $n$.     We define the \emph{marginal map} $\pi_\Delta$ by the formula:
$$\pi_\Delta : \mathbb{R}^\mathcal{R} \longrightarrow  \bigoplus_{F \in \Delta}  \mathbb{R}^{\mathcal{R}_F}; \quad  e_{\bfi} \mapsto  \oplus_{F \in \Delta}  e_{\bfi_F}$$
on unit vectors and extending the map linearly to arbitrary elements of $\mathbb{R}^\mathcal{R}$.  Note that the ``marginal of a marginal is a marginal'', so that will assume that if $F \in \Delta$ and $S \subseteq F$, then $S \in \Delta$ as well.  Thus, we will refer to $\Delta$ as a simplicial complex.  The coordinates on $\bigoplus_{F \in \Delta}  \mathbb{R}^{\mathcal{R}_F}$ are denoted $p^F_{i_F}$, where $F \in \Delta$ and $i_F \in \mathcal{R}_F$.

The \emph{marginal cone} $\mathcal{C}_\Delta$ is the image of the nonnegative orthant under the marginal and the marginal semigroup $\mathcal{S}_\Delta$ is the image of the lattice points in the nonnegative orthant under the marginal map
$$\mathcal{C}_\Delta :=  \pi_\Delta ( \mathbb{R}^\mathcal{R}_{\geq 0}),  \quad \mathcal{S}_\Delta :=  \pi_\Delta ( \mathbb{N}^\mathcal{R}).$$
Clearly both $\mathcal{C}_\Delta$ and $\mathcal{S}_\Delta$ depend on both $\Delta$ and $\mathcal{R}$, but we suppress the dependence on $\mathcal{R}$ in the notation.

First, we consider how two general operations on a simplicial complex $\Delta$ relate to the geometry of the marginal cone and the marginal semigroup.  The first operation is edge contraction.  Suppose the $L \subseteq [n]$ are a set of vertices $L  \in \Delta$.  Define the edge contraction by $\Delta / L$, on $([n] \cup \{v\}) \setminus L$ by
$$\Delta / L := \{ S  \in \Delta : S \cap L = \emptyset \} \cup  \{ S \cup \{v\} : S \cap L \neq \emptyset \}.$$
When we contract the edge, we set $r_v =  \min_{f \in F} r_f$.
The second operation is vertex deletion.  If $v \in [n]$ define the vertex deletion by
$$\Delta \setminus v  =  \{ S \in \Delta :  v \notin S \}.$$
The following lemma seems to be known in the literature on marginal polytopes, though it is difficult to find a precise reference.

\begin{lemma}\label{lem:minors}
Suppose that $\Gamma$ is obtained from $\Delta$ by either 
\begin{enumerate}
\item deleting a vertex, or
\item contracting an edge.
\end{enumerate}
Then $\mathcal{C}_\Gamma$ is (isomorphic to) a face of $\mathcal{C}_\Delta$ and $\mathcal{S}_\Gamma$ is isomorphic to  $\mathcal{S}_\Delta \cap \mathcal{C}_\Gamma$.  In particular, if $\mathcal{S}_\Delta$ is normal then so is $\mathcal{S}_\Gamma$.
\end{lemma}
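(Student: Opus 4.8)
The plan is to handle the two operations separately, in each case exhibiting a linear projection (or coordinate restriction) that realizes $\mathcal{C}_\Gamma$ as a face of $\mathcal{C}_\Delta$ and matches up the lattice points. For vertex deletion, observe that $\pi_{\Delta\setminus v}$ factors through $\pi_\Delta$: the marginals indexed by faces $S$ with $v\notin S$ are literally a subset of the coordinates $p^F_{\bfi_F}$ of $\pi_\Delta$. So the linear projection $\rho$ that forgets the coordinates $p^F_{\bfi_F}$ with $v\in F$ satisfies $\pi_{\Delta\setminus v} = \rho\circ\pi_\Delta$. First I would show that $\rho$ restricted to $\mathcal{C}_\Delta$ is injective, which amounts to checking that a nonnegative table is determined on all $v$-containing margins by its $v$-free margins — this follows because $p^{\{v\}}_{\,\cdot\,}$ and the higher $v$-margins are recovered from a single $v$-free margin together with the identity that summing over $i_v$ returns a $v$-free margin; more cleanly, one identifies the face of $\mathcal{C}_\Delta$ cut out by setting $p^{\{v\}}_{i_v}=0$ for all but one value of $i_v$ and notes that on that face $\rho$ is a bijection onto $\mathcal{C}_{\Delta\setminus v}$. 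Because $\rho$ is an integral projection and a bijection on that face, it carries lattice points to lattice points both ways, giving $\mathcal{S}_{\Delta\setminus v}\cong \mathcal{S}_\Delta\cap\mathcal{C}_{\Delta\setminus v}$.

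For edge contraction, the key point is that setting $r_v=\min_{f\in L}r_f$ and relabeling is exactly the effect of restricting to tables supported on the "diagonal" $\{\bfi : i_f = i_{f'} \text{ for all } f,f'\in L \text{ whenever } i_f\le r_v\}$ — equivalently, one passes to the face of the orthant $\mathbb{R}^{\mathcal{R}}_{\geq 0}$ on which all coordinates $e_{\bfi}$ with $\bfi_L$ non-constant (after truncation) vanish. I would verify that $\pi_\Delta$ maps this face onto a face of $\mathcal{C}_\Delta$ — this requires producing a supporting hyperplane, and here is where the combinatorics of which margins survive must be checked: the inequality should be a sum of the "diagonal" margin coordinates $p^L_{\bfi_L}$ over constant $\bfi_L$, which is maximized (equal to the total count) precisely on the contracted face. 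Having identified the face, the induced map on it agrees with $\pi_{\Delta/L}$ after the relabeling $i_f\mapsto i_v$, and again, since everything is an integral coordinate projection, lattice points correspond to lattice points.

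The final sentence — normality of $\mathcal{S}_\Delta$ implies normality of $\mathcal{S}_\Gamma$ — is then formal: if $\mathcal{S}_\Gamma = \mathcal{S}_\Delta\cap\mathcal{C}_\Gamma$ and $\mathcal{C}_\Gamma$ is a face of $\mathcal{C}_\Delta$, then for $x\in\mathcal{C}_\Gamma\cap\mathbb{Z}^d$ some positive multiple $kx$ lies in $\mathcal{S}_\Delta$ by normality of the latter; but $kx$ still lies in the face $\mathcal{C}_\Gamma$, hence in $\mathcal{S}_\Delta\cap\mathcal{C}_\Gamma=\mathcal{S}_\Gamma$, so $\mathcal{S}_\Gamma$ is saturated in its own lattice, i.e. normal. (One should note that a face of a rational cone meets the ambient lattice in a saturated subsemigroup, so working with the lattice generated by $\mathcal{S}_\Gamma$ versus $\mathbb{Z}^d\cap(\text{span }\mathcal{C}_\Gamma)$ causes no trouble.)

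I expect the main obstacle to be the edge-contraction case: making precise the claim that $\mathcal{C}_{\Delta/L}$ is a \emph{face} of $\mathcal{C}_\Delta$ rather than merely a linear image or a subcone, which means exhibiting the correct supporting functional and checking that the face it defines is exactly the image of the contracted orthant face — the bookkeeping of how faces $S\in\Delta$ with $S\cap L\neq\emptyset$ get their indices merged is the delicate part. The vertex-deletion case and the concluding normality implication are comparatively routine once the right projections are written down.
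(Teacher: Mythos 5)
Your plan follows essentially the same route as the paper: cut out a face of $\mathcal{C}_\Delta$ by a supporting hyperplane (vanishing of $\sum_{j \geq 2} p^{\{v\}}_j$ for vertex deletion; vanishing of the off-diagonal coordinates $p^L_{i_L}$, equivalently diagonal sum equal to $p^\emptyset$, for edge contraction), identify that face with $\mathcal{C}_\Gamma$ and its lattice points with $\mathcal{S}_\Gamma$, and use that facial subsemigroups of normal semigroups are normal. Two write-up quibbles, neither a change of approach: the projection $\rho$ is not injective on all of $\mathcal{C}_\Delta$ (only on the face, as your ``more cleanly'' sentence correctly states), and in the last step the detour through a multiple $kx$ is both unnecessary and inconclusive --- instead apply normality of $\mathcal{S}_\Delta$ directly to $x \in \mathcal{C}_\Gamma \subseteq \mathcal{C}_\Delta$ (with $x$ in the appropriate lattice) to get $x \in \mathcal{S}_\Delta \cap \mathcal{C}_\Gamma = \mathcal{S}_\Gamma$.
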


\begin{proof}
Faces of a cone (or semigroup) are obtained by taking the intersection of the cone (or semigroup) with a hyperplane of the form $c^T p = 0$, where $c^T p\geq 0$ is a valid inequality on the cone (or semigroup).  To prove the lemma, it suffices to find such a hyperplane such that the resulting cone (or semigroup) is isomorphic to the cone (or semigroup) for the corresponding simplicial complex.  This will imply the statement on normality, because any facial semigroup of a normal semigroup is normal.
 
For the case of deleting a vertex $v$, consider the hyperplane given by
$$
c^Tp = \sum_{j = 2}^{r_v} p^{\{v\}}_j.
$$
Clearly $c^Tp \geq 0$ is a valid inequality on $\mathcal{C}_\Delta$, so $\mathcal{C}_\Delta \cap \{p : c^T p  = 0 \}$ is a face of $\mathcal{C}_\Delta$.  Furthermore, both it, and the corresponding semigroup are generated by $\pi_\Delta({e_{i_1, \ldots, i_n}})$ such that $i_v = 1$.  So this is the same as the marginal cone (or semigroup) with the same $\Delta$ and $r_v = 1$.  However, in this case, if $v \subset F$, then the $F$ marginal of a table in this face is the same as the $F \setminus \{v\}$ marginal.  Hence, this facial cone (or semigroup) is isomorphic to $\mathcal{C}_{\Delta \setminus v}$ (or $\mathcal{S}_{\Delta \setminus v }$.

For the case of contracting an edge $L$, we can take the hyperplane
$$
c^T p =  \sum_{i_L \in \mathcal{R}_L \setminus D}  p^L_{i_L}
$$
where $D$ is the diagonal $D = \{i_L \in \mathcal{R}_F : i_{l_1} = i_{l_2} \cdots \}$.  Clearly $c^Tp \geq 0$ is a valid inequality on $\mathcal{C}_\Delta$, so $\mathcal{C}_\Delta \cap \{p : c^T p  = 0 \}$ is a face of $\mathcal{C}_\Delta$.  Furthermore, both it, and the corresponding semigroup are generated by $\pi_\Delta({e_{i_1, \ldots, i_n}})$ such that $i_{l_1} = i_{l_2} = \cdots$.  Then if $F \cap L$ is nonempty, then the $F$ marginal of a table on this face, can be recovered from the marginal of $F \setminus L \cup {l_1}$ where $l_1$ is any element of $L$.  Since we can take the same $l_l$ for all such $F$, we deduce that this facial cone (or semigroup) is isomorphic to $\mathcal{C}_{\Delta / L}$ (or $\mathcal{C}_{\Delta / L}$).
\end{proof}

The holy grail for studying the geometry of the marginal cone would be a result about removing an element of $\Delta$.  It is unlikely that there is a very general result that removing elements from $\Delta$ preserves normality.  Our next crucial result, Lemma \ref{lem:facepopper} concerns a special case of when normality is preserved on removing a face.

To explain Lemma \ref{lem:facepopper} we first need to introduce a slightly modified version of our coordinate system for speaking of marginal cones a semigroups, which allows us to work with full dimensional cones and semigroups.    To reduce the dimensionality, we show that we only need to consider those $i_F$ that do not contain $r_f$, for all $f \in F$.  Note however that we always include a coordinate $p^\emptyset$, which gives the sample size of a table $\mathbf{u}$.

To this end let $R_F  =  \prod_{f \in F}  [r_f - 1]$.

\begin{prop}\label{prop:newcoord}
Consider the map from $L:\bigoplus_{F \in \Delta}  \mathbb{R}^{\mathcal{R}_F} \to \bigoplus_{F \in \Delta}  \mathbb{R}^{R_F}$ that deletes all the $p^F_{i_F}$ such that there is an $f \in F$ with $i_f = r_f$.  Then there is a linear map $M: \bigoplus_{F \in \Delta}  \mathbb{R}^{R_F} \to \bigoplus_{F \in \Delta}  \mathbb{R}^{\mathcal{R}_F}$ such that $M \circ L( \mathcal{C}_\Delta) = \mathcal{C}_\Delta$.  Furthermore, $L( \mathcal{C}_\Delta)$ is a full dimensional polyhedral cone.
\end{prop}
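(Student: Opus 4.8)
The plan is to exhibit the linear map $M$ explicitly via an inclusion–exclusion formula, and then verify the two claims in turn: that $M \circ L$ acts as the identity on $\mathcal{C}_\Delta$, and that $L(\mathcal{C}_\Delta)$ is full dimensional. First I would observe that the coordinates $p^F_{i_F}$ of any point in $\mathcal{C}_\Delta = \pi_\Delta(\mathbb{R}^{\mathcal{R}}_{\geq 0})$ are not independent: for a fixed $F \in \Delta$ and a subset $F' \subsetneq F$, summing $p^F_{i_F}$ over all completions of a fixed $i_{F'} \in \mathcal{R}_{F'}$ recovers $p^{F'}_{i_{F'}}$ (using that $\Delta$ is a simplicial complex, so $F' \in \Delta$), and in particular summing over all of $\mathcal{R}_F$ gives the sample size $p^\emptyset$. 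The idea is that these "marginalization" relations let one solve for every deleted coordinate (one with some $i_f = r_f$) in terms of the retained coordinates indexed by $R_F = \prod_{f \in F}[r_f - 1]$ together with the lower-dimensional retained coordinates. Concretely, for $F = \{f_1, \dots, f_k\}$ and a coordinate with $i_f = r_f$ exactly on a nonempty subset $T \subseteq F$, inclusion–exclusion on the marginalization identities expresses $p^F_{i_F}$ as an alternating sum of coordinates $p^{F \setminus S}_{\cdot}$ over $S \subseteq T$, each of which is retained in the $F \setminus S$ block (after iterating, the fully retained coordinates appear). This alternating sum is exactly the matrix $M$, and it is a genuine linear map on all of $\bigoplus_{F \in \Delta} \mathbb{R}^{R_F}$, not just on the image of $L$.

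Next I would check $M \circ L = \mathrm{id}$ on $\mathcal{C}_\Delta$. Since both $L$ and $M$ are linear and $\mathcal{C}_\Delta$ is spanned by the rays $\pi_\Delta(e_{\bfi})$ for $\bfi \in \mathcal{R}$, it suffices to verify $M(L(\pi_\Delta(e_{\bfi}))) = \pi_\Delta(e_{\bfi})$ for each such generator; by linearity this reduces to a single combinatorial identity for the indicator vector of one table cell, which is the standard inclusion–exclusion verification and I would not belabor it. For full dimensionality of $L(\mathcal{C}_\Delta)$ in $\bigoplus_{F \in \Delta} \mathbb{R}^{R_F}$, the cleanest route is a dimension count: $\dim \mathcal{C}_\Delta = \dim L(\mathcal{C}_\Delta)$ because $M$ restricted to $L(\mathcal{C}_\Delta)$ is the (linear) inverse of $L|_{\mathcal{C}_\Delta}$, so $L$ is injective on the span of $\mathcal{C}_\Delta$; hence it is enough to show $\mathcal{C}_\Delta$ spans a space of dimension $\sum_{F \in \Delta} \prod_{f \in F}(r_f - 1)$. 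Equivalently, one shows the only linear relations among the $p^F_{i_F}$ that hold identically on $\mathcal{C}_\Delta$ are the marginalization relations described above, which cut the ambient dimension $\sum_{F \in \Delta} \prod_{f \in F} r_f$ down to exactly the dimension of the target of $L$. This can be done by exhibiting enough tables $\pi_\Delta(e_{\bfi})$ whose images under $L$ are linearly independent — e.g. choosing cells $\bfi$ supported on index values in $\{1, \dots, r_f - 1\}$ and inducting on $|\Delta|$ — or by directly identifying the row space of the marginalization matrix.

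The main obstacle I anticipate is bookkeeping rather than conceptual difficulty: writing the entries of $M$ cleanly across all blocks $F \in \Delta$ simultaneously (a deleted coordinate in block $F$ is expressed partly via retained coordinates in strictly smaller blocks $F \setminus S$, so the formula is genuinely recursive in $|F|$), and making sure the inclusion–exclusion signs are consistent so that $M \circ L$ telescopes to the identity. A secondary subtlety is confirming that $M$ is well defined on the \emph{entire} space $\bigoplus_{F \in \Delta}\mathbb{R}^{R_F}$ and not merely on $L(\mathcal{C}_\Delta)$ — which it is, since the formula only ever references retained coordinates — so that the composition $M \circ L$ makes sense as asserted. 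Once the formula is pinned down, both verifications are mechanical.
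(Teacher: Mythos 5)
Your construction of $M$ is the same as the paper's: a M\"obius/inclusion--exclusion inversion expressing each deleted coordinate $p^F_{i_F}$ (those with $i_f=r_f$ for $f$ in some nonempty $T\subseteq F$) as an alternating sum of retained coordinates in the blocks $(F\setminus T)\cup S$, $S\subseteq T$, and the check that $M\circ L$ is the identity on the generators $\pi_\Delta(e_{\bfi})$ is indeed routine. (The paper only writes the formula for the extreme coordinate $p^F_{r_F}$, setting $q^S=\sum_{i_S\in R_S}p^S_{i_S}$ and $p^F_{r_F}=\sum_{S\subseteq F}(-1)^{\#S}q^S$; your version for a general deleted index is the same computation carried out after fixing the non-$r_f$ entries.)

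Where you diverge is the full-dimensionality step, and that is also where your proposal has a gap. The paper does not reprove the dimension count: it deduces full-dimensionality from the existence of $M$ together with the known formula $\dim\mathcal{C}_\Delta=\sum_{F\in\Delta}\prod_{f\in F}(r_f-1)$ (Corollary 2.7 of the Ho\c{s}ten--Sullivant reference), which is exactly the dimension of $\bigoplus_{F\in\Delta}\mathbb{R}^{R_F}$. You propose instead to prove that count directly, but your one concrete suggestion --- take the cells $\bfi$ with all entries in $\{1,\dots,r_f-1\}$ --- yields far too few vectors: in the all-binary case it is the single cell $(1,\dots,1)$, whereas you need $\#\Delta$ independent images. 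A family that does work: for each $F\in\Delta$ and each $j_F\in R_F$, take the cell equal to $j_F$ on $F$ and equal to $r_f$ off $F$. The coordinate $p^S_{k_S}$ of its image under $L\circ\pi_\Delta$ is nonzero only when $S\subseteq F$ and $k_S$ agrees with $j_F$ on $S$, so ordering the retained coordinates by $\#S$ makes the resulting matrix unitriangular, and these $\sum_{F\in\Delta}\prod_{f\in F}(r_f-1)$ images are linearly independent. With this replacement (or simply by citing the dimension formula, as the paper does), your argument is complete.
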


\begin{proof}
The full-dimensionality will follow from the existence of the inverse map $M$, because the dimension of the space $\bigoplus_{F \in \Delta}  \mathbb{R}^{R_F}$ equals the dimension of the marginal cone by Corollary 2.7 in \cite{Hosten2002}.

To prove the existence of the inverse map, we use a familiar M\"obius inversion style formula.  In particular, we need to show that we can recover $p^F_{i_F}$ where $i_F$ contains some zeros from all the $p^F_{i_F}$ where these last contain no zeroes.  It suffices to show this in the case where $i_F$ is the vector of all elements equal to $r_f$, we we denote $r_F$.  For each $S \in \Delta$ let $q^F =  \sum_{i_F \in R_F} p^F_{i_F}$.  Then we have
$$p^F_{r_F}  =  \sum_{S \subseteq F} (-1)^{\#S} q^S.$$
\end{proof}

\begin{lemma}\label{lem:facepopper}
Suppose that $\Delta$ and $r_1, \ldots, r_n$ are such that $\mathcal{S}_\Delta$ is normal.  Let $F$ be a maximal face of $\Delta$.   Let $A, B$ be integral matrices such that 
$$\mathcal{C}_\Delta = \left\{(x,y) =  (p^S_{i_S}: S \in \Delta \setminus F,   p^F_{i_F})  :  (A \, \, B) {x \choose y}  \geq 0  \right\}.$$
If the matrix $B$ satisfies the property that the system $By \geq b$ has an integral solution for all $b$ such that the system has a real solution, then $\mathcal{S}_{\Delta \setminus F}$ is normal.
\end{lemma}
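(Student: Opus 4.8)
The plan is to show $\mathcal{S}_{\Delta \setminus F}$ is normal by lifting a point in $\mathcal{C}_{\Delta\setminus F} \cap \mathbb{Z}^{d'}$ to a point of $\mathcal{S}_\Delta$ and then projecting back down to $\mathcal{S}_{\Delta \setminus F}$. First I would adopt the full-dimensional coordinates of Proposition \ref{prop:newcoord}, so that $\mathcal{C}_\Delta$ is a full-dimensional cone described by the inequality system $(A\ B)\binom{x}{y}\geq 0$, with $x$ recording the marginals on the faces $S\in\Delta\setminus F$ (which, since $F$ is a maximal face and $\Delta$ is a simplicial complex, means the marginals on all faces of $\Delta$ except $F$ itself) and $y$ recording the extra coordinates $p^F_{i_F}$ attached to the top face $F$. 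The key point is that the marginal map for $\Delta\setminus F$ is literally the composition of the marginal map for $\Delta$ with the coordinate projection $(x,y)\mapsto x$; hence $\mathcal{C}_{\Delta\setminus F}$ is the projection of $\mathcal{C}_\Delta$ onto the $x$-coordinates, and $\mathcal{S}_{\Delta\setminus F}$ is the projection of $\mathcal{S}_\Delta$.

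Now take any integral point $x_0 \in \mathcal{C}_{\Delta\setminus F}\cap \mathbb{Z}$. Since $x_0$ lies in the projection of $\mathcal{C}_\Delta$, there is a real vector $y_0$ with $(A\ B)\binom{x_0}{y_0}\geq 0$, i.e. $B y_0 \geq -A x_0 =: b$. Because $x_0$ is integral and $A$ is integral, $b$ is integral; by the hypothesis on $B$ — that $By\geq b$ has an integral solution whenever it has a real one — we may choose $y_0 \in \mathbb{Z}$. Then $(x_0,y_0)$ is an integral point of $\mathcal{C}_\Delta$. Since $\mathcal{S}_\Delta$ is normal, $(x_0,y_0)\in \mathcal{S}_\Delta$, so it is the $\Delta$-marginal vector of some table $\mathbf{u}\in\mathbb{N}^{\mathcal{R}}$. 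Projecting onto the $x$-coordinates, $x_0$ is the $(\Delta\setminus F)$-marginal vector of the same table $\mathbf{u}$, hence $x_0\in\mathcal{S}_{\Delta\setminus F}$. This shows $\mathcal{C}_{\Delta\setminus F}\cap\mathbb{Z} \subseteq \mathcal{S}_{\Delta\setminus F}$, which is normality.

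The step I expect to be the genuine content rather than bookkeeping is the passage from a real lift $y_0$ to an integral lift $y_0$: this is exactly where the structural hypothesis on $B$ enters, and without it the argument collapses (an integral $x_0$ with only a non-integral compatible $y_0$ would block the lift into $\mathcal{S}_\Delta$). A minor subtlety to get right is the identification in the first paragraph: one must check that after applying $L$ from Proposition \ref{prop:newcoord}, the coordinates genuinely split as ``$\Delta\setminus F$ part'' $\oplus$ ``coordinates $p^F_{i_F}$ with $i_F\in R_F$'', which holds precisely because $F$ is maximal, so no proper subface's coordinates are lost when we drop $F$. Everything else — integrality of $b=-Ax_0$, and the fact that a table realizing $(x_0,y_0)$ automatically realizes $x_0$ — is immediate.
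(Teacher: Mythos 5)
Your proposal is correct and follows essentially the same route as the paper's proof: lift an integral point $x_0$ of $\mathcal{C}_{\Delta\setminus F}$ to a real point of $\mathcal{C}_\Delta$, use the hypothesis on $B$ (with $b=-Ax_0$ integral and real-feasible) to make the lift integral, invoke normality of $\mathcal{S}_\Delta$, and project back. Your added remarks on why the projection of $\mathcal{C}_\Delta$ onto the $x$-coordinates is $\mathcal{C}_{\Delta\setminus F}$ only make explicit what the paper leaves implicit.
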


\begin{proof}
Note that this lemma is merely a general property of projections of cones $(A \, \, B) {x \choose y}  \geq 0$ onto one set of coordinates.  Indeed, consider the cone $\mathcal{C}_{\Delta \setminus F}$, and let $x$ be an integral point in it.  If we can find an integral $y$ such that $(x,y) \in \mathcal{C}_{\Delta}$, we will be done because $\mathcal{C}_\Delta$ is normal since $(x,y) \in \mathcal{S}_\Delta$ implies $x \in \mathcal{S}_{\Delta \setminus F}$.  The condition on $B$ guarantees that an integral $y$ always exists, since the set of such $y$ is the solution to the system $By  \leq - Ax$ where $-Ax$ is integral and real feasible.
\end{proof}

One more tool we will need for proving normality, is that normality is preserved when gluing two simplicial complexes together according to a reducible decomposition.  A simplicial complex $\Delta$ has a \emph{reducible decomposition} $(\Delta_1, S, \Delta_2)$ is $\Delta = \Delta_1 \cup \Delta_2$, $\Delta_1 \cap \Delta_2 = 2^{S}$ (where $2^S$ is the power set of $S$), and either $\Delta_1$ nor $\Delta_2 = 2^S$.  A simplicial complex with a reducible decomposition is called \emph{reducible}.  A simplicial complex is \emph{decomposable} if it is either a simplex (of the form $2^K$) or it is reducible and both $\Delta_1$ and $\Delta_2$ are decomposable.  

\begin{lemma}
Let $\Delta$ be a reducible simplicial complex with decomposition $(\Delta_1, S, \Delta_2)$.  If $\mathcal{S}_{\Delta_1}$ and $\mathcal{S}_{\Delta_2}$ are normal then so is $\mathcal{S}_\Delta$.
\end{lemma}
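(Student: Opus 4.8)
The plan is to reduce the normality of $\mathcal{S}_\Delta$ to that of $\mathcal{S}_{\Delta_1}$ and $\mathcal{S}_{\Delta_2}$ by exhibiting the marginal cone $\mathcal{C}_\Delta$ as a kind of ``fiber product'' of $\mathcal{C}_{\Delta_1}$ and $\mathcal{C}_{\Delta_2}$ over $\mathcal{C}_{2^S}$, and then lifting an integral point of $\mathcal{C}_\Delta$ to a nonnegative integral table by independently lifting its pieces. First I would fix notation: since $\Delta = \Delta_1 \cup \Delta_2$ with $\Delta_1 \cap \Delta_2 = 2^S$, the ambient coordinate space $\bigoplus_{F \in \Delta} \mathbb{R}^{\mathcal{R}_F}$ decomposes as coordinates indexed by faces only in $\Delta_1$, faces only in $\Delta_2$, and faces in $2^S$; and the marginal map $\pi_\Delta$ is compatible with the three marginal maps $\pi_{\Delta_1}$, $\pi_{\Delta_2}$, $\pi_{2^S}$. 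Using Proposition \ref{prop:newcoord} I would pass to the reduced (full-dimensional) coordinate system in which the sample-size coordinate $p^\emptyset$ is retained; this is what makes the gluing work, because $2^S$ always contains $\emptyset$ so the two pieces share at least the sample size.

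The key structural claim is: a point $p$ lies in $\mathcal{C}_\Delta$ if and only if its restriction $p_1$ to the $\Delta_1$-coordinates lies in $\mathcal{C}_{\Delta_1}$, its restriction $p_2$ to the $\Delta_2$-coordinates lies in $\mathcal{C}_{\Delta_2}$, and these two restrictions agree on the shared $2^S$-coordinates. The ``only if'' direction is immediate from functoriality of marginalization. For the ``if'' direction, given nonnegative tables $u^{(1)} \in \mathbb{R}^{\mathcal{R}_{V_1}}_{\geq 0}$ and $u^{(2)} \in \mathbb{R}^{\mathcal{R}_{V_2}}_{\geq 0}$ (where $V_1, V_2$ are the vertex sets of $\Delta_1, \Delta_2$) whose $S$-marginals coincide, one builds a table on $V_1 \cup V_2$ by the conditional-independence formula $u_{\bfi} = u^{(1)}_{\bfi_{V_1}} \, u^{(2)}_{\bfi_{V_2}} / u^{S}_{\bfi_S}$ on the support where $u^S_{\bfi_S} > 0$ (and $0$ elsewhere); this is a nonnegative real table whose $\Delta$-marginal is $p$. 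So $\mathcal{C}_\Delta$ is exactly the cone of consistent pairs.

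Now given an integral point $p \in \mathcal{C}_\Delta \cap \mathbb{Z}^d$, its restrictions $p_1, p_2$ are integral points of $\mathcal{C}_{\Delta_1}, \mathcal{C}_{\Delta_2}$ respectively, so by normality of $\mathcal{S}_{\Delta_1}$ and $\mathcal{S}_{\Delta_2}$ there exist nonnegative integral tables $u^{(1)} \in \mathbb{N}^{\mathcal{R}_{V_1}}$, $u^{(2)} \in \mathbb{N}^{\mathcal{R}_{V_2}}$ with $\pi_{\Delta_1}(u^{(1)}) = p_1$, $\pi_{\Delta_2}(u^{(2)}) = p_2$. The final step is to glue $u^{(1)}$ and $u^{(2)}$ into a single nonnegative \emph{integral} table $u$ on $V_1 \cup V_2$ with $\pi_\Delta(u) = p$: since $u^{(1)}$ and $u^{(2)}$ induce the same $S$-marginal $w = u^S \in \mathbb{N}^{\mathcal{R}_S}$, I would, slice by slice over each $\bfi_S \in \mathcal{R}_S$, distribute the common count $w_{\bfi_S}$ into an integral $\mathcal{R}_{V_1 \setminus S} \times \mathcal{R}_{V_2 \setminus S}$ array whose two margins are the (integral) conditional slices of $u^{(1)}$ and $u^{(2)}$ — this is just the fact that a two-way transportation polytope with integral margins has an integral point, e.g. the northwest-corner rule. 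The main obstacle is exactly this gluing/rounding step: one must check that the integral lift $u^{(i)}$ can be chosen compatibly, i.e. that after marginalizing to $S$ the two lifts genuinely produce the \emph{same} integer array $w$ (which holds because $p_1$ and $p_2$ agree on $2^S \ni S$, and that agreement is in the data of $p$), and then that the slice-wise transportation problem is feasible over $\mathbb{Z}$ — which it is, being a classical integral-polytope fact. Everything else is bookkeeping about how $\pi_\Delta$ factors through the three sub-complexes.
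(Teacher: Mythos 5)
Your proof is correct, and its skeleton coincides with the paper's: restrict the integral point $p \in \mathcal{C}_\Delta$ to the two pieces, lift each restriction to a nonnegative integral table $u^{(i)}$ on the ground set $V_i = |\Delta_i|$ using the assumed normality of $\mathcal{S}_{\Delta_i}$, and then glue the two lifts along their common $S$-marginal. Where you genuinely diverge is in how the gluing is justified. The paper forms the auxiliary complex $\Gamma$ with the two facets $|\Delta_1|$ and $|\Delta_2|$, argues that the pair $(u^{(1)},u^{(2)})$ lies in $\mathcal{C}_\Gamma$ because the only facet-defining inequalities of a decomposable marginal cone are positivity \cite{Geiger2006}, and then invokes normality of decomposable models \cite{Hosten2002} to produce a single integral table on $V_1 \cup V_2$. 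You instead construct the glued table directly: slice by slice over $\bfi_S$ you solve a two-way transportation problem whose margins are the integral slices of $u^{(1)}$ and $u^{(2)}$, both summing to the common value $w_{\bfi_S}$ (they agree because $S \in \Delta_1 \cap \Delta_2$, so the $S$-marginal is shared data of $p$), and integrality of transportation polytopes (e.g.\ the northwest-corner rule) gives an integral fill-in; since every $F \in \Delta$ lies in $\Delta_1$ or $\Delta_2$ and the glued table has $V_i$-marginal $u^{(i)}$, its $\Delta$-marginal is $p$. In effect you re-prove, in a self-contained way, exactly the special case of decomposable normality that the paper cites, so your route is more elementary and avoids needing the facet description of $\mathcal{C}_\Gamma$, at the cost of some bookkeeping. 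Two minor remarks: your ``fiber product'' characterization of $\mathcal{C}_\Delta$ (the conditional-independence construction for the ``if'' direction) is never used --- only the trivial direction is needed; and the appeal to the reduced coordinates of Proposition \ref{prop:newcoord} is inessential, since the argument works in the original coordinates and also covers the degenerate case $S = \emptyset$, where the shared data is just the sample size.
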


\begin{proof}
Let $x = (x_{\Delta_1}, x_{\Delta_2})  \in \mathcal{C}_\Delta$.  (Without changing the issues of normality where can repeat all the elements of $x^T_{i}$ such that $T \subseteq S$).  We must show that $x \in \mathcal{S}_\Delta$.  Now, both $\mathcal{S}_{\Delta_1}$ and $\mathcal{S}_{\Delta_2}$ are normal, which implies that $x_{\Delta_1} \in \mathcal{S}_{\Delta_1}$ and $x_{\Delta_2} \in \mathcal{S}_{\Delta_2}$.  Let $|\Delta_i| =  \cup_{T \in \Delta_i} T$ be the ground set of $\Delta_i$.  Let $\Gamma$ be the simplicial complex with facets $|\Delta_1|$ and $ |\Delta_2|$.

For each $i = 1,2$, by normality  of $\mathcal{S}_{\Delta_i}$, there exist integral vectors $y_i \in \mathcal{S}_{|\Delta_i|}$ such that $\pi_{\Delta_i}(y_i) = x_i$.  Furthermore, since $\pi_S(x_1) = \pi_S(x_2)$ we have that the pair $(y_1, y_2)$ satisfies the natural equality relations to lie in the cone $\mathcal{C}_\Gamma$.  However, $\Gamma$ is an example of decomposable simplicial complex, so its only facet defining inequalities come from positivity \cite{Geiger2006}, so $(y_1, y_2)$ lies in $\mathcal{C}_R$.  It is also known that decomposable models are normal \cite{Hosten2002}, so $(y_1, y_2) \in \mathcal{S}_R$.  This implies that $(\pi_{\Delta_1}(y_1), \pi_{\Delta_2}(y_2) ) = (x_{\Delta_1}, x_{\Delta_2}) \in \mathcal{S}_\Delta$, so $\mathcal{S}_\Delta$ is normal.
\end{proof}

To prove Theorem \ref{thm:main} we need some important folklore decompositions for $K_4$ minor-free graphs.  Recall that a graph $G$ is \emph{chordal} if every cycle of length $\geq 4$ has a diagonal.  Chordal graphs have an equivalent characterization, that they can be built up inductively starting with complete graphs, by gluing two chordal graphs together along a complete subgraph.  A chordal triangulation of a graph $G$ is a chordal graph $H$ such that $G \subseteq H$.  The \emph{tree width} of $G$, denoted $\tau(G)$ is one less than the minimal clique number over all chordal triangulations of $G$.  Note that chordal graphs are closely related to decomposable simplicial complexes: a simplicial complex is decomposable if and only if it is the complex of cliques of a chordal graph.  A folklore result relates tree width and $K_4$ minor free graphs.

\begin{thm}\label{thm:treewidth}
A graph $G$ is free of $K_4$ minors if and only if $\tau(G) \leq 2$.
\end{thm}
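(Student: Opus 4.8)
The plan is to establish the two implications separately. The forward direction ($\tau(G)\le 2 \Rightarrow G$ has no $K_4$ minor) is essentially the minor-monotonicity of tree width, while the reverse direction ($G$ has no $K_4$ minor $\Rightarrow \tau(G)\le 2$) I would prove by induction on $|V(G)|$, exploiting the low degeneracy forced by excluding a $K_4$ minor.

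For the forward direction, suppose $H\supseteq G$ is a chordal triangulation with $\omega(H)\le 3$. Since $H$ contains every minor of $G$, it suffices to check that a chordal graph $H$ with $\omega(H)\le 3$ has no $K_4$ minor. I would do this with a clique tree $T$ of $H$: the maximal cliques of $H$ form a tree $T$ in which, for each vertex $x$, the cliques containing $x$ span a subtree. Given branch sets $V_1,\dots,V_4$ of a putative $K_4$ minor, the four sets $S_i=\{C\in T : C\cap V_i\neq\emptyset\}$ span subtrees (as each induced subgraph on $V_i$ is connected), and they pairwise intersect, because each pair $V_i,V_j$ is joined by an edge of $H$ whose endpoints lie in a common maximal clique. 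By the Helly property of subtrees of a tree the $S_i$ share a node $C^*$, and then $C^*$ meets all four disjoint $V_i$, forcing $|C^*|\ge 4$, a contradiction. (One could instead simply cite that $\mathrm{tw}$ is minor-monotone together with $\mathrm{tw}(K_4)=3$.)

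For the reverse direction I would induct on $n=|V(G)|$. If $n\le 3$ then $G$ is a subgraph of $K_3$, which is chordal with clique number $3$, so $\tau(G)\le 2$. Suppose $n\ge 4$. By Dirac's theorem, every graph of minimum degree at least $3$ contains a subdivision of $K_4$, hence a $K_4$ minor; so our $K_4$-minor-free $G$ has a vertex $v$ with $\deg_G(v)\le 2$. If $\deg_G(v)\le 1$, I take a chordal triangulation $H'$ of $G\setminus v$ with $\omega(H')\le 3$ (induction hypothesis, since $G\setminus v$ is $K_4$-minor-free) and reattach $v$ with its at most one edge; as $v$ is simplicial this keeps the graph chordal and the clique number $\le 3$. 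If $\deg_G(v)=2$, with neighbors $a,b$, the key step is to contract: set $G'':=G/va=(G\setminus v)+ab$, which is a minor of $G$, hence $K_4$-minor-free. By induction $G''$ has a chordal triangulation $H''$ with $\omega(H'')\le 3$, and crucially $ab\in E(H'')$. Since the neighborhood $\{a,b\}$ of $v$ is now a clique of $H''$, adjoining $v$ back as a simplicial vertex yields a chordal graph $H=H''+v+va+vb$ with $\omega(H)\le 3$ that contains $G$; thus $\tau(G)\le 2$.

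The main obstacle is exactly the degree-$2$ case. Reattaching $v$ directly to a chordal triangulation of $G\setminus v$ need not preserve chordality: if $a$ and $b$ are not adjacent there, then $v$ together with an induced path joining $a$ and $b$ gives a chordless cycle of length $\ge 4$. Contracting the edge $va$ before applying the induction hypothesis is what forces the edge $ab$ into the triangulation, which in turn makes $v$ simplicial when it is reinserted; this is the one genuinely delicate point. The other external input is Dirac's theorem, which supplies the degree-$\le 2$ vertex (equivalently, the $2$-degeneracy of $K_4$-minor-free graphs); the remaining verifications (chordality is preserved on adding a simplicial vertex, clique numbers stay $\le 3$, and $G\subseteq H$) are routine.
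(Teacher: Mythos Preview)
Your argument is correct. The paper, however, does not actually prove this theorem: it is stated there as a folklore result and used as a black box in the proof of Theorem~\ref{thm:main}, so there is no proof in the paper to compare against.

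Since you have supplied a proof where the paper gives none, a brief assessment is in order. Both directions are sound. For $\tau(G)\le 2\Rightarrow$ no $K_4$ minor, your clique-tree/Helly argument is a clean way to avoid simply citing minor-monotonicity of tree width; the only phrasing to tighten is ``$H$ contains every minor of $G$'', which should read ``every minor of $G$ is a minor of $H$'' (since $G\subseteq H$). For the converse, the induction with Dirac's theorem providing a vertex of degree at most $2$ is standard, and you correctly identify the one delicate step: in the degree-$2$ case you must contract $va$ (rather than merely delete $v$) so that the edge $ab$ is forced into the triangulation and $v$ can be reinserted simplicially. The verification that $G\subseteq H$ and $\omega(H)\le 3$ is routine, as you say. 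This is exactly the kind of elementary proof one expects for this folklore equivalence, and it fits well as a supplement to the paper.
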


The following lemma, which can be handled computationally using the program Normaliz \cite{normaliz}.

\begin{lemma}\label{lem:normnot}  Let $r_i = 2$ for all $i$.  Then the semigroup $\mathcal{S}_{K_l}$ for the complete graph $K_l$ is normal 
if and only if $l = 1,2,3$.
\end{lemma}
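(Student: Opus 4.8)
The plan is to establish the two directions separately. For $l = 1, 2, 3$, normality is either trivial or follows from known structural facts: $K_1$ and $K_2$ give decomposable models, which are normal by \cite{Hosten2002}, while $K_3$ is the three-cycle $[12][13][23]$ with all $r_i = 2$, whose semigroup $\mathcal{S}_{K_3}$ is well known to be normal (this is the smallest binary ``no-three-way-interaction'' model; one can verify it by hand, or cite the computation). For $l \geq 4$ we must show $\mathcal{S}_{K_l}$ is \emph{not} normal. Here the key reduction is Lemma \ref{lem:minors}: since $K_4$ is obtained from $K_l$ by deleting vertices, $\mathcal{S}_{K_4}$ is isomorphic to $\mathcal{S}_{K_l} \cap \mathcal{C}_{K_4}$ (a face), so if $\mathcal{S}_{K_l}$ were normal then $\mathcal{S}_{K_4}$ would be normal. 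Thus it suffices to exhibit a single hole in $\mathcal{S}_{K_4}$.

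The concrete step, then, is to produce an explicit point in $(\mathcal{C}_{K_4} \cap \mathbb{Z}^d) \setminus \mathcal{S}_{K_4}$. The natural approach is to run Normaliz \cite{normaliz} on the cone $\mathcal{C}_{K_4}$ — whose generators are the $16$ vectors $\pi_{K_4}(e_{\bfi})$ for $\bfi \in \{1,2\}^4$, expressed in the reduced full-dimensional coordinates of Proposition \ref{prop:newcoord} (edge marginals, vertex marginals, and the sample size $p^\emptyset$) — and read off the list of holes, or equivalently check whether the Hilbert basis coincides with the generating set. One then records an explicit hole vector together with a certificate: namely a rational expression of the hole as a nonnegative rational combination of the generators (showing membership in $\mathcal{C}_{K_4}$), and a short argument that no integral combination of generators yields it (for instance via a separating functional on the relevant lattice, or simply by the finiteness of the Normaliz check). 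Since this step is purely computational and the output is a finite certificate, it is routine to state though tedious to typeset; the paper should include the explicit hole.

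The main obstacle, such as it is, is twofold. First, one must be careful that the ``only if'' direction genuinely requires the minor reduction: it is not enough to check $K_4$ alone in principle, but Lemma \ref{lem:minors} does all the work here, so this is really just bookkeeping. Second — and this is the only substantive subtlety — one should make sure the computation is done in the correct lattice: the ambient lattice in which $\mathcal{S}_{K_4}$ lives is the one generated by the $\pi_{K_4}(e_{\bfi})$, not all of $\mathbb{Z}^d$, and a ``hole'' must lie in $\mathcal{C}_{K_4}$ intersected with \emph{that} lattice (equivalently, in the new coordinates, with the saturation of that lattice). Getting this lattice right is exactly what Normaliz handles by default when given the generators, so the practical obstruction vanishes, but it is worth a sentence of care in the writeup. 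Beyond that, the proof is a short citation-plus-computation argument.
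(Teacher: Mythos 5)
Your proposal is correct and follows essentially the same route as the paper: normality for $l=1,2$ by elementary structure, $l=3$ and the non-normality of $K_4$ (a single hole) by a Normaliz computation, and the extension to all larger $l$ via the face/minor reduction of Lemma \ref{lem:minors}. The only cosmetic differences are that you justify $l=1,2$ via decomposability rather than unimodularity of the simplicial cone, and you note the lattice subtlety explicitly, which the paper handles by computing holes in $\mathcal{C}_{K_4}\cap\bigoplus_{F}\mathbb{Z}^{\mathcal{R}_F}$.
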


\begin{proof}
For $l =1, 2$, $\mathcal{C}_{K_l}$ is a unimodular simplicial cone and normality follows.  For $l = 3$, the semigroup $\mathcal{S}_{K_3}$ is 7 dimensional with 8 generators and normality is verified with Normaliz.  For $l = 4$, a computation with Normaliz shows that the set $(\mathcal{C}_{K_4} \cap \bigoplus_{F \in K_4} \mathbb{Z}^{\mathcal{R}_F} ) \setminus \mathcal{S}_{K_4}$ consists of a single point (see also, \cite{Takemura2008}).  For $l \geq 5$, $\mathcal{S}_{K_l}$ is not normal by Lemma \ref{lem:minors}.
\end{proof}

Now for our last piece of the argument, we need to recall a result about the polyhedral structure of cone $\mathcal{C}_G$ in the case the $G$ is $K_4$ minor free.  We work with the alternate coordinate system introduced in Proposition \ref{prop:newcoord}.  Since we are working will that case that $r_i = 2$, each coordinate $p^F_{i_F}$ will have $i_F = (1,1,\ldots)$, a vector of all ones.  To simplify notation, we merely use $p^F$ to denote this coordinate.

\begin{thm}\cite{Baharona1986} \label{thm:facetsbinary}
Let $r_i = 2$ for all $i$.  If the graph $G$ is free of $K_4$ minors, the cone $\mathcal{C}_G$ is the solution to the following system of inequalities:
$$ p^{jk} \geq 0, p^{j} - p^{jk} \geq 0, p^{k} - p^{jk} \geq 0, p^{\emptyset} - p^{j} - p^k + p^{jk} \geq 0   \mbox{ for all } jk \in E$$
$$
\sum_{jk \in O}  p^{jk}  - \sum_{jk \in C \setminus O} p^{jk} - \sum_{j \in V(O)} p^j  + \sum_{j \in V(C \setminus O)} p^j + \tfrac{ \#O - 1}{2} p^\emptyset  \geq 0$$  $$\mbox{ for all cycles } C \in G \mbox{ and odd subsets } O \subseteq C
$$
where $V(O)$ and $V(C \setminus O)$ denotes the set of vertices that appear in $O$ and $C \setminus O$, respectively.
\end{thm}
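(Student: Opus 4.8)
The plan is to deduce the theorem from the classical result of Barahona and Mahjoub \cite{Baharona1986} describing the cut polytope of a graph with no $K_5$ minor, to which our statement is equivalent after a standard change of coordinates. Handling the connected components and the isolated vertices separately by an easy reduction, I would assume $G$ is connected with no isolated vertex, so that every vertex lies on an edge. Working in the coordinates of Proposition \ref{prop:newcoord}, the generators of $\mathcal{S}_G$ are the vectors $v(x)$, indexed by $x \in \{0,1\}^{V(G)}$, with $v(x)^\emptyset = 1$, $v(x)^j = x_j$ and $v(x)^{jk} = x_j x_k$; hence $\mathcal{C}_G$ is the cone over the correlation polytope $\conv\{((x_j)_j, (x_j x_k)_{jk \in E}) : x \in \{0,1\}^{V(G)}\}$, with $p^\emptyset$ the homogenizing coordinate.

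Next I would pass to the suspension $\nabla G$, obtained by adjoining a vertex $0$ adjacent to every vertex of $G$, and invoke the covariance map: the linear map that fixes $p^\emptyset$ and substitutes $\pi_{0j} = p^j$, $\pi_{jk} = p^j + p^k - 2 p^{jk}$ is invertible and carries the generator $v(x)$ of $\mathcal{C}_G$ to $(1;\, \delta(S_x))$, where $S_x = \{j : x_j = 1\}$ and $\delta(S_x)$ is the corresponding cut vector of $\nabla G$; thus it identifies $\mathcal{C}_G$ with the cone over $\mathrm{CUT}(\nabla G)$, carrying facets to facets. I would then observe that $\nabla G$ has no $K_5$ minor (a $K_5$ minor with branch sets $V_1, \dots, V_5$, say $0 \in V_5$, restricts to a $K_4$ minor of $G$ on $V_1, \dots, V_4$), and apply \cite{Baharona1986} to conclude that $\mathrm{CUT}(\nabla G)$, hence after homogenization $\mathcal{C}_G$, is cut out by the bound inequalities $0 \leq \pi_e \leq 1$ together with the odd-cycle inequalities $\sum_{e \in F} \pi_e - \sum_{e \in C \setminus F} \pi_e \leq \#F - 1$ over all cycles $C$ of $\nabla G$ and odd $F \subseteq C$.

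It then remains to translate this description back into the $p$-coordinates and prune it. For a triangle $\{0, j, k\}$ with $jk \in E(G)$, the four odd-cycle inequalities (the three with $\#F = 1$ and the one with $\#F = 3$) become, after substitution, exactly the four local inequalities of the statement, and these in turn imply all the bounds $0 \leq \pi_e \leq 1$. For a cycle $C$ of $\nabla G$ that avoids $0$ — that is, a cycle of $G$ — the substitution, together with the fact that each vertex of $C$ meets exactly two edges of $C$ (so the contributions of the vertices meeting one edge of $F$ and one of $C \setminus F$ cancel), turns the odd-cycle inequality with $O = F$ into the stated cycle inequality, the term $\tfrac{\#O - 1}{2} p^\emptyset$ coming from homogenizing the constant $\#F - 1$. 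Finally, every cycle of $\nabla G$ of length at least $4$ through $0$ has a chord at $0$, since $0$ is adjacent to all of $V(G)$; so by the standard fact that an odd-cycle inequality follows from the two inequalities attached to the cycles cut off by a chord, an induction on length shows these are all consequences of the triangle inequalities at $0$, i.e. of the local inequalities. Collecting what survives yields exactly the system in the statement, and the validity of each of its members on $\mathcal{C}_G$ is checked directly on the generators $v(x)$.

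I expect the crux to be this last step: performing the coordinate translation of the general odd-cycle inequality cleanly, and in particular proving that the long cycles through the apex $0$ are redundant. That redundancy is precisely what lets the statement quantify only over cycles of $G$, and it has to be argued rather than assumed.
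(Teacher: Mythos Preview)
The paper does not prove this theorem at all: it is stated with a citation to \cite{Baharona1986} and used as a black box in the proof of Theorem~\ref{thm:main}. So there is no ``paper's own proof'' to compare against.

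Your derivation is nonetheless the standard and correct way to extract the statement from \cite{Baharona1986}. The identification of $\mathcal{C}_G$ with the cone over the correlation polytope, the covariance map sending it to the cone over $\mathrm{CUT}(\nabla G)$, and the observation that $G$ being $K_4$-minor-free forces $\nabla G$ to be $K_5$-minor-free are exactly right (your branch-set argument works because the four branch sets not containing $0$ are connected in $\nabla G$ using only edges of $G$, and the edges between them also lie in $G$). The back-translation is clean: the triangles $\{0,j,k\}$ indeed yield the four local inequalities, and for a cycle $C\subseteq G$ your cancellation of the vertex terms is precisely why the theorem's sums over $V(O)$ and $V(C\setminus O)$ are what remains after dividing by $2$. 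The redundancy of cycles through the apex is handled correctly by the chord-splitting lemma, and the bound inequalities are redundant because every edge of $\nabla G$ lies in a triangle once you have assumed no isolated vertices. In short, your proposal supplies a valid proof where the paper simply cites one.
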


We now have all the tools in hand to prove our main results on normality.

\begin{proof}[Proof of Theorem \ref{thm:main}]
First of all, we will show that if a graph $G$ has a $K_4$ minor, then $\mathcal{S}_G$ could not be normal.  If a graph has a $K_4$ minor, then that minor can be realized by vertex deletions and edge contractions alone.  Since Lemma \ref{lem:normnot} implies that $\mathcal{S}_{K_4}$ is not normal, Lemma \ref{lem:minors} implies that $\mathcal{S}_G$ is not normal.

Now suppose that $G$ is free of $K_4$ minors.  By Theorem \ref{thm:treewidth}, $G$ has tree width $\leq 2$.  If $G$ is a chordal graph with $\tau(G) \leq 2$ it can be broken down into cliques of size $1$, $2$ and $3$ by reducible decompositions along cliques of sizes $0$, $1$, or $2$.  Because cliques of size $0$, $1$, and $2$ are faces of the simplicial complex of $G$, these reducible decompositions preserve normality.    Lemma \ref{lem:normnot} implies that $\mathcal{S}_{K_l}$ is normal if $l =1, 2,$ or $3$.  Thus, $\mathcal{S}_G$ is normal if $G$ is a chordal graph.  Since every $K_4$ minor free graph is the edge subgraph of a chordal $K_4$ minor free graph, it suffices to show that normality is preserved when deleting an edge from a $K_4$ minor free graph.

To prove this last claim, let $G$ be a $K_4$ minor free graph such that $\mathcal{S}_G$ is normal, and let $e$ be an arbitrary edge in $G$, and let $H = G \setminus e$, the edge deletion of $G$.  According to Theorem \ref{thm:facetsbinary}, the cone $\mathcal{C}_G$ is the solution to the system of equations that have the form $(A,B) (x, p^e)^T \geq 0$, where all the entries of $B$ are either $\pm 1, 0$.  In particular, because there is only one coordinate $p^e$, $B$ is a column of $0, \pm 1$.    Thus, the system $B y \geq b$, where $b$ is an integral vector such that   $By \geq b$ has a real solution is equivalent to a system $c_1 \leq y \leq c_2$, where $c_1$ and $c_2$ are integers.  Every such system which has a real solution has an integral solution.  Lemma \ref{lem:facepopper} implies that $\mathcal{S}_H$ is normal.
\end{proof}


\section{Further Directions}

This paper has solved the normality question for binary graph models, however, this is still very far from a complete solution to the normality problem for arbitrary sets of marginals for arbitrary sized tables.    In this section, we outline some possible directions for further research.

First of all, the technique used in this paper (gluing along facets and then removing a facet preserving normality, together with knowledge of the defining inequalities of structure of marginal cone) can  be used to prove normality in many more situations besides just for all $r_i = 2$. 

\begin{ex}
Consider the three cycle $\Delta = [12][13][23]$, with $r_1 = 2, r_2 = 4,$ and  $r_3 = 3$.  A direct computation with Normaliz shows that $\mathcal{S}_\Delta$ is normal in this case, and the facet description computed there has that, for the edge $13$, the corresponding $B$ matrix is a $110 \times 2$ matrix where each row is one of the vectors 
$$(0,0), \pm(0,1), \pm (1,0) \pm(1,1).$$
It is easy to see that such a $B$ satisfies the conditions of Lemma \ref{lem:facepopper}, so that we can delete the edge $[13]$ to preserve normality.  Of course, this is not very interesting as this produces a decomposable complex $[12][23]$, which are always normal.

Now suppose that we have the complex $\Delta = [12][13][14][23][34]$, with 
$r_1 = 2, r_2 = 4, r_3 = 3, $ and $r_4 = 4$.  This is reducible into two models of the above type, along the edge $[13]$, so $\mathcal{S}_\Delta$ is normal.  Since $\Delta$ is reducible, the polyhedral structure of $\mathcal{C}_\Delta$ is obtained by taking the union of the two constraint sets for the two halves.  In particular, if we try to delete the edge $[13]$, we get a $B$ matrix of size $220 \times 2$, where each row is one of the vectors
$$(0,0), \pm(0,1), \pm (1,0) \pm(1,1).$$
Hence the resulting four cycle  $\Delta' = [12][14][23][34]$ is normal with  $r_1 = 2, r_2= 4, r_3= 3,$ and $ r_4= 4$.   \qed
\end{ex}

The preceding example illustrates that the techniques can be applied to more general graphs.  It seems natural to hope that once the (non)normality of all the three cycle models $[12][13][23]$ have been determined for all $r_1, r_2,$ and $r_3$, that a combination of techniques from the preceding Section could be used to decide normality for arbitrary graphs.  

A second situation where these techniques are likely to apply fruitfully is for arbitrary simplicial complexes but with all $r_i = 2$.  This is because the condition of Lemma \ref{lem:facepopper} is especially easy to verify in this case, because $B$ is always a column vector.  In fact, in examples we have investigated, the conditions of Lemma \ref{lem:facepopper} seem to be necessary and sufficient for guaranteeing normality on removing a maximal face.

\begin{ex}
Let $\Delta = [12][134][234]$, and $r_1 = r_2 = r_3 = r_4 = 2$.  A direct computation in Normaliz shows that $\mathcal{S}_\Delta$ is normal.  The facet defining inequalities of $\mathcal{C}_\Delta$ have $0, \pm 1, \pm 2$ coefficients on the coordinate $p^{134}$.  Removing facet $[134]$ produces the complex $\Gamma = [12][13][14][234]$, and Normaliz verifies that $\mathcal{S}_\Gamma$ is not normal.  \qed
\end{ex}

\begin{ques}
Let $r_i = 2$ for all $i$.
Suppose that $\Delta$ is such that $\mathcal{S}_\Delta$ is normal.  Let $F$ be a maximal face of $\Delta$.   Let $A, B$ be integral matrices such that 
$$\mathcal{C}_\Delta = \left\{(x,y) =  (p^S_{i_S}: S \in \Delta \setminus F,   p^F_{i_F})  :  (A \, \, B) {x \choose y}  \geq 0  \right\},$$
with irredundant and minimal description.
If the matrix $B$ has an entry of absolute value $> 1$ does this imply that $\mathcal{S}_{\Delta \setminus F}$ is not normal?
\end{ques}

Note, however, that Lemma \ref{lem:facepopper}, while adequate for the examples we have encountered here, is probably not the best possible result along these lines.  This is because the set of $b$ on which the condition ``real solution implies integral solution'' needs to be valid is, in fact, very small.

\begin{pr}
Find a stronger version of Lemma \ref{lem:facepopper}.
\end{pr}  

Lastly we would like to address what we think is the main take-away message of this paper.  This is that the polyhedral structure of the cone $\mathcal{C}_\Delta$ seems crucial to studying the normality of $\mathcal{S}_\Delta$.  Unfortunately, the only class where a nice polyhedral description of the marginal cone $\mathcal{C}_\Delta$ is known is the case of $K_4$-minor free graphs with all $r_i = 2$. 

\begin{pr}
Find new classes of $\Delta$ where there is a elegant uniform description of the polyhedral cone $\mathcal{S}_\Delta$.
\end{pr}

A related situation where there is an elegant polyhedral description concerns the cut cones \cite{Deza1997}.  We hope that these properties could also be used to resolve the normality questions for cut cones from \cite{Sturmfels2008}.

\section*{Acknowledgments}
The author received support from the U.S. National Science Foundation under grant DMS-0840795.



\begin{thebibliography}{99}

\bibitem{Baharona1986}  F.~Barahona and A.~R.~Mahjoub. On the cut polytope. \emph{Mathematical Programming}
{\bf 36} (1986) 157Ð-173.

\bibitem{normaliz}  W.~Bruns and B.~Ichim. Normaliz 2. Algorithms for affine semigroups and
rational cones. Available from {\tt http://www.math.uos.de/normaliz}

\bibitem{Develin2003}  M.~Develin and S.~Sullivant.  Markov bases of binary graph models.  \emph{Annals of Combinatorics}  {\bf 7} (2003), 441-466

\bibitem{Diaconis1998}  P.~Diaconis and B.~Sturmfels.  Algebraic algorithms for sampling from conditional distributions.  \emph{Annals of Statistics}  {\bf 26} (1998) 363-397.

\bibitem{Deza1997} M.~M.~Deza and M.~Laurent. \emph{Geometry of Cuts and Metrics.}  Algorithms and Combinatorics, {\bf 15}. Springer-Verlag, Berlin, 1997. 

\bibitem{Geiger2006} D.~Geiger, C.~Meek, and B.~Sturmfels.  On the toric algebra of graphical models,  \emph{Annals of Statistics}  {\bf 34}  (2006) 1463--1492. 



\bibitem{Hosten2002} S.~Ho\c{s}ten and S.~Sullivant.  Gr\"{o}bner bases and polyhedral geometry of reducible and cyclic models.  \emph{Journal of Combinatorial Theory: Series A} {\bf 100} (2002) no. 2, 277--301.

\bibitem{Sturmfels2008} B.~Sturmfels and S.~Sullivant.  Toric geometry of cuts and splits. \emph{Michigan Mathematical Journal} {\bf 57} (2008) 689--709.

\bibitem{Takemura2008}  A.~Takemura and R.~Yoshida.  A generalization of the integer linear infeasibility problem. \emph{Discrete Optimization} {\bf 5} (2008) 36--52.

\end{thebibliography}
\end{document}